\newtheorem{theorem}{Theorem}[section]
\newtheorem{lemma}[theorem]{Lemma}
\newtheorem{corollary}[theorem]{Corollary}
\theoremstyle{definition}
\newtheorem{example}[theorem]{Example}
\theoremstyle{remark}
\newtheorem*{acknowledgement}{Acknowledgement}
\numberwithin{equation}{section}
\newcommand{\gq}{/\!\! /}
\newcommand{\cox}[1]{\mathcal{R}\left(#1\right)}
\newcommand{\Cl}[1]{\mathrm{Cl}\left(#1\right)}
\newcommand{\pic}[1]{\mathrm{Pic}\left(#1\right)}
\newcommand{\linpic}[2]{\mathrm{Pic}_{#1}\left(#2\right)}
\newcommand{\calO}[1]{\mathcal{O}\left(#1\right)}
\newcommand{\Spec}{\mathrm{Spec}}
\newcommand{\relspec}[2]{\mathrm{Spec}_{#1}\left(#2\right)}
\newcommand{\schnitt}[2]{\Gamma\left(#1,\mathcal{#2}\right)}
\newcommand{\schnittv}[2]{\Gamma\left(#1,#2\right)}
\newcommand{\WDiv}[1]{\mathrm{WDiv}\left(#1\right)}
\newcommand{\divi}[1]{\mathrm{div}\left(#1\right)}
\newcommand{\bbK}{\mathbb{K}}
\newcommand{\bbQ}{\mathbb{Q}}
\newcommand{\bbN}{\mathbb{Z}_{\ge 0}}
\newcommand{\ch}[1]{\mathbb{X}\left(#1\right)}
\newcounter{itemnumber}
\begin{document}
\title[Good quotients of Mori dream spaces]{Good quotients of Mori dream spaces}
\author[Hendrik B\"aker]{Hendrik B\"aker} \address{Mathematisches Institut, Universit\"at T\"ubingen, Auf der Morgenstelle 10, 72076 T\"ubingen, Germany}
\email{hendrik.baeker@mathematik.uni-tuebingen.de}
\subjclass[2000]{14L24, 14L30, 14C20}

\begin{abstract}
We show that good quotients of algebraic varieties with finitely generated Cox ring have again finitely generated Cox ring.

\end{abstract}
\maketitle

\section{Introduction}

\noindent 
Let $X$ be a normal variety over some algebraically closed field $\bbK$ of characteristic zero.
If $X$ has finitely generated divisor class group and only constant invertible global functions then one can associate to $X$ a {\it Cox ring}; this is the graded $\bbK$-algebra
\[
 \cox{X}:=\bigoplus_{\Cl{X}}\schnittv{X}{\mathcal{O}_X(D)}.
\]
In the case of torsion in $\Cl{X}$ the precise definition requires a little care; see Section~2 for a reminder and \cite{coxscript} for details. We ask whether finite generation of the Cox ring is preserved when passing to the quotient by a group action. More precisely, for an action of a reductive group $G$ on $X$ we consider {\it good quotients}; by definition these are affine morphisms $\pi\colon U\rightarrow V$ with $\mathcal{O}_V=\left(\pi_*\mathcal{O}_U\right)^G$ where $U\subseteq X$ may be any open $G$-invariant subset.

\begin{theorem}
\label{thm2}
Let a reductive affine algebraic group $G$ act on a normal variety $X$ with finitely generated Cox ring $\cox{X}$, and let $U\subseteq X$ be an open invariant subset admitting a good quotient $\pi\colon U\rightarrow U\gq G$ such that $U\gq G$ has only constant invertible global functions. Then the Cox ring $\cox{U\gq G}$ is finitely generated as well.
\end{theorem}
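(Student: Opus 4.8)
The plan is to present the Cox ring of $Y:=U\gq G$ as a ring of semiinvariants of a reductive group acting on a localisation of $\cox{X}$, and then to deduce finite generation from finiteness of invariants. Set $R:=\cox{X}$ and $\bar X:=\Spec R$, and recall from the theory of Cox rings \cite{coxscript} that $\bar X$ carries an action of the characteristic quasitorus $H_X:=\Spec\bbK[\Cl{X}]$, encoding the $\Cl{X}$-grading, together with an open invariant subset $\hat X\subseteq\bar X$ whose complement has codimension at least two and for which $q\colon\hat X\to X$ is a good quotient; one has $\Gamma(\hat X,\mathcal O_{\hat X})=R$ and $\Gamma(X,\mathcal O_X(D))=R_{[D]}$ for every Weil divisor $D$. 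By the lifting theory for the characteristic space, the $G$-action on $X$ lifts to an action on $\bar X$ of an algebraic group $\tilde G$ sitting in an extension $1\to H_X\to\tilde G\to G\to 1$; as $H_X$ is diagonalisable and $G$ is reductive, $\tilde G$ is reductive. Writing $\hat U:=q^{-1}(U)$, the composite $\hat U\to U\to Y$ is a good quotient for $\tilde G$ (quotients in stages).

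Next I would record that $R':=\Gamma(\hat U,\mathcal O_{\hat U})$ is finitely generated. Indeed $X\setminus U$ consists of finitely many prime divisors $D_1,\dots,D_r$ together with a subset of codimension at least two; the latter plays no role, while removing the $D_i$ replaces $R$ by its localisation at the product $f$ of the canonical sections $1_{D_i}\in R_{[D_i]}$. Thus $R'=R_f$, a localisation of the finitely generated algebra $R$, hence finitely generated, and $\tilde G$ acts on $\Spec R'$.

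The heart of the argument is to identify $\cox{Y}$ with a semiinvariant ring of $\tilde G$ on $R'$. A class $[E]\in\Cl{Y}$ pulls back along $\pi$ to a $G$-linearised divisor class on $U$; lifted to $\hat U$, this linearisation is a character $\chi_E\in\ch{\tilde G}$ whose restriction to $H_X$ is $[\pi^*E]\in\Cl{X}$. The map $[E]\mapsto\chi_E$ is a homomorphism $\Cl{Y}\to\ch{\tilde G}$, and it is injective: if $\chi_E$ is trivial then $\pi^*\mathcal O_Y(E)\cong\mathcal O_U$ equivariantly, so $\mathcal O_Y(E)=\pi_*(\pi^*\mathcal O_Y(E))^G=\pi_*\mathcal O_U^G=\mathcal O_Y$, using that $Y$ has only constant invertible global functions. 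Writing $\Lambda\subseteq\ch{\tilde G}$ for the image, so $\Lambda\cong\Cl{Y}$, the identity $\pi_*\mathcal O_U^G=\mathcal O_Y$ together with the projection formula yields, degree by degree, $\Gamma(Y,\mathcal O_Y(E))=\Gamma(U,\mathcal O_U(\pi^*E))^G=R'_{\chi_E}$, where $R'_\chi$ denotes the $\chi$-semiinvariants. Summing over $\Cl{Y}$ gives an isomorphism of graded algebras $\cox{Y}\cong\bigoplus_{\chi\in\Lambda}R'_\chi$. Finite generation of the latter then follows from reductivity by the standard passage from semiinvariants to invariants: letting $\tilde G$ act on $\bbK[\Lambda]$ through its characters and diagonally on $R'\otimes_\bbK\bbK[\Lambda]$, one checks $\bigl(R'\otimes_\bbK\bbK[\Lambda]\bigr)^{\tilde G}\cong\bigoplus_{\chi\in\Lambda}R'_\chi$; since $\Lambda\cong\Cl{Y}$ is finitely generated, $R'\otimes_\bbK\bbK[\Lambda]$ is a finitely generated algebra, and as $\tilde G$ is reductive its invariant ring is finitely generated. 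Hence $\cox{Y}$ is finitely generated.

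The main obstacle is the middle identification: lifting the $G$-action to a reductive $\tilde G$ on the characteristic space when $\Cl{X}$ has torsion, and, above all, organising the pullback of Weil divisor classes through the good quotient together with their canonical $G$-linearisations into a homomorphism $\Cl{Y}\to\ch{\tilde G}$ for which $\Gamma(Y,\mathcal O_Y(E))=R'_{\chi_E}$ holds naturally and multiplicatively. Once this dictionary between divisor classes on $Y$ and characters of $\tilde G$ is established, the finiteness of $\cox{Y}$ is a formal consequence of the reductivity of $\tilde G$.
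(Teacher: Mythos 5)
Your architecture is genuinely different from the paper's: you go \emph{upstairs} to the total coordinate space $\bar X=\Spec\,\cox{X}$, lift the $G$-action to an extension $\tilde G$ of $G$ by the characteristic quasitorus $H_X$, and present $\cox{U\gq G}$ as a ring of $\tilde G$-semiinvariants of the localisation $R_f$, whereas the paper never lifts the group action at all: it stays \emph{downstairs}, pulls back the sheaf of divisorial algebras of a group $K$ of divisors on $U\gq G$ surjecting onto $\Cl{U\gq G}$ (whose finite generation is Lemma \ref{clfingen}, via Knop--Kraft--Vust), proves by hand that $\pi^*$ gives an isomorphism $\schnitt{U\gq G}{S}\cong\schnitt{U}{T}^G$, gets finite generation of $\schnitt{U}{T}$ from Theorem \ref{thm1}, and applies Hilbert's finiteness theorem to $G$ itself. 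Both proofs end with reductivity, and your final reduction from semiinvariants to invariants via $\bigl(R'\otimes_\bbK\bbK[\Lambda]\bigr)^{\tilde G}$ is correct; the problem is what comes before it.

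There are two genuine gaps. First, the lifting theory you invoke as a black box exists in the literature only for \emph{connected} $G$: the construction of an extension $1\to H_X\to\tilde G\to G\to 1$ acting on $\bar X$ and commuting with $H_X$ rests on the fact that a connected group acts trivially on $\Cl{X}$. The theorem covers arbitrary reductive affine $G$ --- in particular finite or otherwise disconnected groups --- whose action can permute divisor classes nontrivially; then no lift commuting with $H_X$ exists, $\tilde G$ could at best normalise $H_X$, and the existence and algebraicity of such a lift is an additional result you would have to prove, not cite. Second, the step you yourself call ``the main obstacle'' --- the dictionary $[E]\mapsto\chi_E$ with $\Gamma(Y,\mathcal{O}_Y(E))=R'_{\chi_E}$, compatibly with multiplication --- is precisely the mathematical heart of the theorem, and your sketch does not close it. Concretely: (i) pulling back Weil divisor classes along the good quotient $\pi$ is not defined for merely normal $Y$; one needs the reduction to smooth $X$ and $U\gq G$, which the paper carries out explicitly and you omit; (ii) since $\Gamma(\hat U,\mathcal{O}^*)$ is much larger than $\bbK^*$ (it contains pullbacks of units on $U$ and the inverted canonical sections in $R_f$), a $\tilde G$-linearisation of the trivialised bundle $q^*\pi^*\mathcal{O}_Y(E)$ a priori yields only a cocycle with values in that unit group, not a character of $\tilde G$ --- exactly the discrepancy, measured by $\mathrm{H}^1_{\mathrm{alg}}\left(G,\calO{U}^*\right)$, that the Knop--Kraft--Vust machinery in Lemma \ref{clfingen} is there to control. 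So the proposal assumes at its central point what actually requires proof, and the paper's direct verification that every $G$-invariant homogeneous section of $\schnitt{U}{T}$ descends (using closedness of images of invariant sets under good quotients and effectivity of divisors being detected by $\pi^*$) is the content your argument is missing.
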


Note that this statement was proven in \cite[Theorem~2.3]{HuKe} for the case that $X$ is affine with finite divisor class group and $U\gq G$ is a GIT-quotient. Moreover, in \cite[Remark~2.3.1]{HuKe} it was expected that GIT-quotients of Mori dreams spaces, i.e. $\bbQ$-factorial, projective varieties with finitely generated Cox ring, are again Mori dream spaces, which is a direct consequence of Theorem \ref{thm2}.

The following result is a step in the proof of Theorem \ref{thm2} but it also might be of independent interest. Let $K\subseteq \WDiv{X}$ be a finitely generated subgroup of Weil divisors. By the {\it sheaf of divisorial algebras} associated to $K$ we mean the sheaf of $\mathcal{O}_X$-algebras
\[
 \mathcal{S}:=\bigoplus_{D\in K}\mathcal{S}_D,\hspace{1cm}\mathcal{S}_D:=\mathcal{O}_X\left(D\right).
\]

\begin{theorem}
\label{thm1}
Let $X$ be a normal variety with finitely generated Cox ring
$\mathcal{R}(X)$.
Then, for any finitely generated subgroup
$K \subseteq \WDiv{X}$ and any
open subset $U \subseteq X$,
the algebra of sections
$\Gamma(U,\mathcal{S})$ of the sheaf
of divisorial algebras $\mathcal{S}$
associated to $K$ is finitely generated.
\end{theorem}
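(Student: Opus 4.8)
The plan is to reduce the statement about sections of a sheaf of divisorial algebras to a finite-generation statement about the Cox ring itself, and then exploit the assumed finite generation of $\mathcal{R}(X)$. The key structural fact I would use is that there is a natural relation between the sheaf $\mathcal{S}$ associated to $K$ and the Cox sheaf: both are built from the groups $\Gamma(V,\mathcal{O}_X(D))$, and the Cox ring $\mathcal{R}(X)$ is itself the algebra of global sections of the divisorial algebra indexed by all of $\mathrm{Cl}(X)$ (up to the care needed with torsion and with choosing representatives). So my first step would be to set up the comparison between $K$ and its image $\overline{K}$ under the projection $\mathrm{WDiv}(X)\to\mathrm{Cl}(X)$, and to isolate the kernel $K_0 := K\cap\mathrm{PDiv}(X)$ of principal divisors inside $K$. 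This splits the problem into a "graded by $\overline K$" part, which should be controlled by $\mathcal{R}(X)$, and a "graded by $K_0$" part, which is governed by invertible functions and hence essentially by the units on $U$.

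Let me think about what to prove on an open set. Let $U\subseteq X$ be open. I first want to understand the $\overline K$-homogeneous pieces $\Gamma(U,\mathcal{O}_X(D))$ for $D\in K$. Since $\mathcal{R}(X)$ is finitely generated, I would like to say that $\Gamma(U,\mathcal{S})$ is obtained from $\mathcal{R}(X)$ (or rather from the sections of the Cox sheaf over $U$) by a combination of three operations: (i) restricting to a sub-grading lattice $K\to\mathrm{Cl}(X)$ (a Veronese-type subalgebra, or a base change along the map of grading groups), (ii) localizing at some functions cutting out $X\setminus U$, and (iii) adjoining the finitely many units associated to the principal directions in $K_0$. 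Each of these operations preserves finite generation: a subalgebra given by a sublattice of the grading is finitely generated over the base by standard convex-geometry / Gordan-type arguments once $\mathcal{R}(X)$ is Noetherian, localization of a finitely generated algebra is finitely generated, and adjoining finitely many invertible generators adds only finitely many generators.

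The key steps, in order, are therefore: (1) reduce to the case where $K$ surjects onto (a finite-index subgroup of) $\mathrm{Cl}(X)$, or otherwise identify $\Gamma(U,\mathcal{S})$ as a homogeneous component-wise retract of the Cox sheaf sections; (2) handle the open set $U$ by covering $X$ by the finitely many affine charts on which $\mathcal{R}(X)$ trivializes, reducing to sections over affine opens where $\Gamma(U,\mathcal{S}_D)$ becomes a concrete module computation; (3) reassemble, using that $\Gamma(U,-)$ commutes with the finite direct sums in each bounded piece and that finite generation is a local-then-glue property once we know the generators lie in finitely many degrees; and (4) treat the $K_0$-part separately, showing the principal directions contribute only a finitely generated group of units.

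I expect **the hard part** to be step (2): passing to an \emph{arbitrary} open subset $U$, not just an affine or a big open, while keeping finite generation. Over the whole of $X$ the result is close to the definition of $\mathcal{R}(X)$, but a general $U$ may have complicated boundary behavior, and $\Gamma(U,\mathcal{O}_X(D))$ is not simply a localization of $\Gamma(X,\mathcal{O}_X(D))$ when $X\setminus U$ has components of codimension $\ge 2$. The natural way around this is to use that $X$ has finitely generated Cox ring to realize $X$ as a quotient of an open subset of the total coordinate space $\overline X=\mathrm{Spec}\,\mathcal{R}(X)$ by the characteristic quasitorus, pull $U$ back to a quasitorus-invariant open subset $\widehat U\subseteq\overline X$, and identify $\Gamma(U,\mathcal{S})$ with a graded piece of $\Gamma(\widehat U,\mathcal{O})$. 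Since $\overline X$ is affine and the relevant invariant functions form a finitely generated algebra (localization arguments on the finitely generated ring $\mathcal{R}(X)$), one then transports finite generation back down to $\Gamma(U,\mathcal{S})$. Making this identification precise, and checking it respects the $K$-grading rather than merely the $\mathrm{Cl}(X)$-grading, is where the real work lies.
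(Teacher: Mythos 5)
Your outline of the global case has the right shape: the three operations you list---a Veronese-type restriction for the part of $K$ mapping injectively to $\Cl{X}$, adjoining the finitely many units coming from the principal part $K_0$, and a localization to pass from $X$ to $U$---are exactly the ingredients of the paper's argument. (One point you gloss over: after proving finite generation for the subgroup $K'=K^0\oplus K^1$, one must pass \emph{up} to the finite-index overgroup $K$; this is the converse of a Veronese restriction and needs a separate result, \cite[Proposition~4.4]{K3} in the paper.) The genuine gap is in the step you yourself flag as the hard part. You assert that $\Gamma(U,\mathcal{O}_X(D))$ ``is not simply a localization'' of $\Gamma(X,\mathcal{O}_X(D))$ when $X\setminus U$ has components of codimension at least two, and you propose to get around this by pulling $U$ back to an invariant open subset $\widehat U$ of the total coordinate space $\overline X=\Spec\,\cox{X}$ and working with $\Gamma(\widehat U,\mathcal{O})$. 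This detour resolves nothing: $\Gamma(\widehat U,\mathcal{O})$ is the algebra of regular functions on an open subset of an affine variety, and such algebras need \emph{not} be finitely generated---the paper's Example~\ref{ivanex} is built on precisely this failure (an open orbit $G/H$ inside a normal affine $X$ with $\schnitt{G/H}{O}$ not finitely generated). So the phrase ``localization arguments on the finitely generated ring $\mathcal{R}(X)$'' begs the question: to apply them you would need to know that $\widehat U$ is, up to a subset of codimension at least two, a principal open set, which is exactly the issue you started with downstairs.

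The missing idea is that normality makes the codimension-two components harmless: sections of a sheaf of divisorial algebras on a normal variety are unchanged by removing a closed subset of codimension at least two. Granting this, one writes $X\setminus U=\mathrm{Supp}(D')\cup Z$ with $D'$ effective and $\mathrm{codim}\,Z\ge 2$, arranges (after enlarging $K$ so that it surjects onto $\Cl{X}$) a divisor $D\in K$ with $D'=D+\divi{f}$, so that $f\in\schnittv{X}{\mathcal{S}_D}$, and concludes
\[
\schnitt{U}{S}\ =\ \schnittv{X\setminus\mathrm{Supp}\left(D'\right)}{\mathcal{S}}\ =\ \schnitt{X}{S}_f,
\]
where the first equality is the codimension-two insensitivity and the second is the standard localization identity for divisorial algebras. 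In other words, $\schnitt{U}{S}$ \emph{is} a localization of $\schnitt{X}{S}$ after all; the fact you thought was an obstruction is dissolved by normality, not by passing to $\Spec\,\cox{X}$. Without this observation your proposal does not close, and with it the detour through the total coordinate space is unnecessary.
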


In particular, if $X$ has finitely generated Cox ring, then for every open subset $U\subseteq X$ the algebra of regular functions $\schnitt{U}{O}$ is finitely generated; note that even for affine varieties this fails in general, compare Example~\ref{ivanex}.

\section{Proof of Theorem 1.2}
Let us recall the construction of the Cox ring of a normal irreducible variety $X$ with finitely generated divisor class group and only constant invertible global functions. Fixing a finitely generated subgroup $K$ of the Weil divisors such that the projection $c\colon K\rightarrow \Cl{X}$ is surjective with kernel $K^0$, we can associate to $K$ the sheaf of divisorial $\mathcal{O}_X$-algebras $\mathcal{S}$. In order to identify the isomorphic homogeneous components of $\mathcal{S}$ we fix a character $\chi\colon K^0\rightarrow \bbK(X)^*$ such that $\divi{\chi(E)}=E$ holds for every $E\in K^0$ and consider the sheaf of ideals $\mathcal{I}$ locally generated by the sections $1-\chi(E)$ where $E$ runs through $K^0$ and $\chi(E)$ is homogeneous of degree $-E$.

The {\it Cox sheaf} is the sheaf $\mathcal{R}:=\mathcal{S}/\mathcal{I}$ together with the $\Cl{X}$-grading
\[
 \mathcal{R}=\bigoplus_{\left[D\right]\in\Cl{X}}\mathcal{R}_{\left[D\right]},\hspace{1cm} \mathcal{R}_{\left[D\right]}:=p\left(\bigoplus_{D'\in c^{-1}\left(\left[D\right]\right)}\mathcal{S}_{D'}\right),
\]
where $p:\mathcal{S}\rightarrow \mathcal{R}$ denotes the projection. The algebra of global sections is called the {\it Cox ring} of $X$, which is - up to isomorphy - independent of the choices of $K$ and $\chi$. For later use, note that by \cite[Lemma~3.3.5]{coxscript} for any open set $U\subseteq X$ we have
\[
 \schnitt{U}{R}\cong\schnitt{U}{S}/\schnitt{U}{I}.
\]
Moreover, from \cite[Lemma 4.2.2]{coxscript} we infer that the Cox ring is invariant when passing to a big open subset, i.e. an open subset whose complement is of codimension at least two. In particular, the two algebras $\schnitt{X^{\mathrm{reg}}}{R}$ and $\schnitt{X}{R}$ are equal, where $X^{\mathrm{reg}}$ denotes the set of regular points of $X$.

\begin{proof}[Proof of Theorem~\ref{thm1}]
For the first part of the proof we proceed similarly as in \cite[Proposition~5.1.4]{coxscript}. First assume that $K$ projects onto $\Cl{X}$. By $K^0$ we denote the subgroup of $K$ consisting of principal divisors, i.e., the kernel of the projection $c\colon K\rightarrow \Cl{X}$, and fix a basis $D_1,\ldots ,D_s$ for $K$, such that $K^0$ is generated by $a_1D_1,\ldots a_kD_k$ with certain $a_i\in\bbN$. Moreover, let $K^1\subseteq K$ be the subgroup generated by $D_{k+1},\ldots ,D_s$ and set $K':=K^0\oplus K^1$. We then have the associated Veronese subsheaves
\[
 \mathcal{S}^0:=\bigoplus_{D\in K^0}\mathcal{S}_D, \hspace{1cm}\mathcal{S}^1:=\bigoplus_{D\in K^1}\mathcal{S}_D, \hspace{1cm} \mathcal{S}':=\bigoplus_{D\in K'}\mathcal{S}_D.
\]
We claim that $\schnittv{X}{\mathcal{S}}$ is finitely generated. First note, that $\mathcal{S}_D\rightarrow \mathcal{R}_{\left[D\right]}$ is an isomorphism by \cite[Lemma 3.3.4]{coxscript}. Because $K^1\cong c\left(K^1\right)$ holds, these isomorphisms fit together to an isomorphism of sheaves
\[
 \mathcal{S}^1=\bigoplus_{D\in K^1}\mathcal{S}_D\rightarrow \bigoplus_{D\in c\left(K^1\right)} \mathcal{R}_{\left[D\right]}=:\mathcal{R}^1.
\]
Since $\schnitt{X}{R}$ is finitely generated the Veronese subalgebra $\schnittv{X}{\mathcal{R}^1}$ of the Cox ring is as well finitely generated (cf. \cite[Proposition 1.1.6]{coxscript}) which gives finite generation of $\schnittv{X}{\mathcal{S}^1}$. Every homogeneous function $f\in \schnittv{X}{\mathcal{S}'_{E_0+E_1}}$, where $E_i\in K^i$, is a product of a homogeneous section in $\schnittv{X}{\mathcal{S}_{E_1}}$ and an invertible section $g\in \schnittv{X}{\mathcal{S}_{E_0}}$, which itself is the product of certain $g_i^{\alpha_i}$ with $\divi{g_i}=a_iD_i$. Consequently, $\schnittv{X}{\mathcal{S}'}$ is generated by the functions $g_i$ and generators of $\schnittv{X}{\mathcal{S}^1}$; and thus is finitely generated. Since $K'$ is of finite index in $K$ the algebra $\schnittv{X}{\mathcal{S}}$ inherits finite generation from $\schnittv{X}{\mathcal{S}'}$ by \cite[Proposition~4.4]{K3}.

Now, let $U \subsetneq X$ be an arbitrary open subset. Then the complement $X\backslash U$ can be written as a union of the support of an effective divisor $D'$ and a closed subset of codimension at least two. Let $D\in K$ be a divisor which is linearly equivalent to $D'$, i.e. $D'=D+\divi{f}$ with a suitable rational function $f$. Then $f$ is contained in $\schnittv{X}{\mathcal{S}_D}$ and \cite[Remark~2.1.6]{coxscript} shows that $\schnitt{U}{S}=\schnitt{X}{S}_f$ is finitely generated.

Finally, if $K \subseteq \WDiv{X}$
does not project onto $\Cl{X}$,
then we take any finitely generated group
$\tilde{K} \subseteq \WDiv{X}$ with $K\subseteq \tilde{K}$ projecting onto
$\Cl{X}$ and obtain finite generation
of $\Gamma(U,\tilde{\mathcal{S}})$ for the associated
sheaf $\tilde{\mathcal{S}}$ of divisorial algebras.
This gives finite generation for the
Veronese subalgebra
$\Gamma(U,\mathcal{S}) \subseteq \Gamma(U,\tilde{\mathcal{S}})$
corresponding to $K \subseteq \tilde{K}$.
\end{proof}

\begin{corollary}
\label{cor}
 Let $X$ be normal variety with finitely generated Cox ring. Then for every open subset $U\subseteq X$ the algebra $\schnitt{U}{O}$ is finitely generated.
\end{corollary}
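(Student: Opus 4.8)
The plan is to derive this at once from Theorem~\ref{thm1}. The whole point is that the algebra of regular functions $\schnitt{U}{O}$ is itself the algebra of sections of a sheaf of divisorial algebras, for the simplest admissible choice of finitely generated subgroup of $\WDiv{X}$. So rather than proving anything new, I would just specialize the already-established Theorem~\ref{thm1} to the trivial grading.

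Concretely, I would take $K = \{0\} \subseteq \WDiv{X}$, which is certainly a finitely generated (here even trivial) subgroup, and consider its associated sheaf of divisorial algebras $\mathcal{S} = \bigoplus_{D \in K}\mathcal{O}_X(D)$. Since the only divisor in $K$ is the zero divisor, this collapses to the single summand $\mathcal{S}_0 = \mathcal{O}_X(0) = \mathcal{O}_X$. Hence for the given open subset $U \subseteq X$ one has $\schnittv{U}{\mathcal{S}} = \schnitt{U}{O}$, and applying Theorem~\ref{thm1} to this $K$ and $U$ yields finite generation of $\schnitt{U}{O}$ immediately.

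I do not expect any genuine obstacle, since all the substance has been front-loaded into Theorem~\ref{thm1}; the corollary is exactly its degree-zero specialization. If one insisted on starting from a nontrivial $K$ projecting onto $\Cl{X}$, then $\schnitt{U}{O}$ would appear instead as the identity-graded component $\schnittv{U}{\mathcal{S}_0}$ of the finitely generated algebra $\schnittv{U}{\mathcal{S}}$, and one would recover finite generation by viewing this component as the ring of invariants under the diagonalizable group action defining the $K$-grading and invoking reductivity; but the choice $K = \{0\}$ makes this detour entirely unnecessary.
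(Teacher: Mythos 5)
Your proposal is correct and matches the paper's own (implicit) argument: the paper states this corollary without a separate proof precisely because it is the specialization of Theorem~\ref{thm1} to the trivial subgroup $K=\{0\}\subseteq\WDiv{X}$, for which $\mathcal{S}=\mathcal{O}_X$ and $\schnittv{U}{\mathcal{S}}=\schnitt{U}{O}$. Note that this choice of $K$ is legitimately covered by Theorem~\ref{thm1}, since its final reduction step (enlarging $K$ to a group $\tilde{K}$ projecting onto $\Cl{X}$ and passing to the Veronese subalgebra) handles exactly such $K$ that do not surject onto $\Cl{X}$.
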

This observation allows us to construct normal affine varieties with non-finitely generated Cox ring.

\begin{example}
\label{ivanex}
Let $G$ be a connected semi-simple algebraic group and $H$ a unipotent subgroup such that the ring of invariants
 $$\schnitt{G}{O}^H=\schnitt{G/H}{O}$$
is not finitely generated. By \cite[Corollary~2.8]{grosshans} there is an open $G$-equivariant  embedding $G/H\subseteq X$ into a normal affine variety $X$. Since $G$ is semi-simple, $X$ has only constant invertible global functions and by the exact sequence
$$ 
\xymatrix{
0
\ar[r]
&
\pic{G/H}
\ar[r]
&
\pic{G}
}
$$
in  \cite[Proposition~3.2]{kraft} the divisor class group of $G/H$ is finitely generated. Consequently, $\Cl{X}$ is finitely generated as well but by Corollary~\ref{cor} the Cox ring  $\cox{X}$ is not finitely generated. For explicit examples see \cite[Section 2.4]{counterex}.
\end{example}

\section{Proof of theorem 1.1}
We consider a smooth irreducible algebraic variety $X$. Fix a finitely generated subgroup $K\subseteq\WDiv{X}$. By smoothness of $X$, the associated sheaf of divisorial algebras $\mathcal{S}$ is locally of finite type. This allows us to consider its relative spectrum over $X$ which we will denote by $\hat{X}:=\relspec{X}{\mathcal{S}}$. Note that the regular functions on $\hat{X}$ are precisely the global sections $\schnitt{X}{S}$. Since $\mathcal{S}$ is $K$-graded $\hat{X}$ comes with the action of the torus $H:=\Spec\, \bbK [K]$ and the canonical morphism $p\colon \hat{X}\rightarrow X$ is a good quotient for this action.

Now let an affine algebraic reductive group $G$ act on $X$. By a $G${\it -linearization} of the group $K$ we mean a lifting of the $G$-action to the relative spectrum $\hat{X}$ commuting with the $H$-action and making the projection $p$ equivariant. Any such $G$-linearization yields a $G$-representation on the regular functions of $\hat{X}$ via $g\cdot f(\hat{x})=f(g^{-1}\cdot\hat{x})$ and thereby induces a $G$-representation on $\schnitt{X}{S}$. In the special case where $K$ is a group of $G$-invariant divisors, \cite[Propositions~1.3 and~1.7]{gitweil} show that $K$ is canonically $G$-linearized and the induced representation on the global sections $\schnitt{X}{S}$ coincides with the action of $G$ on the rational functions of $X$ given by $g\cdot f(x)=f(g^{-1}\cdot x)$.

\begin{lemma}
\label{clfingen}
 Let an affine algebraic group $G$ act on the normal variety $X$ and let $U\subseteq X$ be an open $G$-invariant subset which admits a good quotient $\pi\colon U\rightarrow U\gq G$. If $\Cl{X}$ is finitely generated then $\Cl{U\gq G}$ is finitely generated as well.
\end{lemma}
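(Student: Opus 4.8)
The plan is to compare $\Cl{U\gq G}$ with the class group of $U$ itself, which is automatically finitely generated. Indeed, for any open subset $W\subseteq X$ the standard excision sequence yields a surjection $\Cl{X}\twoheadrightarrow\Cl{W}$, so $\Cl{W}$ is a quotient of the finitely generated group $\Cl{X}$ and hence finitely generated. In particular $\Cl{U}$ and $\Cl{W}$ for every open $W\subseteq U$ are finitely generated, and this will be the only input from the hypothesis.

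First I would reduce to the case of a geometric quotient with $G$ connected. By Rosenlicht's theorem the action admits a geometric quotient over a dense open invariant subset, and since a good quotient is categorical this locus may be taken to be $\pi^{-1}(V_0)$ for a nonempty open $V_0\subseteq V:=U\gq G$; put $U_0:=\pi^{-1}(V_0)$. As $V\setminus V_0$ contains only finitely many prime divisors, the excision sequence
\[
 \bigoplus_{D\subseteq V\setminus V_0}\mathbb{Z}\cdot[D]\longrightarrow\Cl{V}\longrightarrow\Cl{V_0}\longrightarrow 0
\]
shows that $\Cl{V}$ is finitely generated as soon as $\Cl{V_0}$ is. One also factors $\pi$ through the good quotient by the identity component $G^0$, so that the residual finite group can be handled separately; it thus suffices to treat a geometric quotient $\pi\colon U_0\to V_0$ with $G$ connected.

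For such a quotient, pullback of prime divisors gives an isomorphism $\WDiv{V_0}\cong\WDiv{U_0}^G$ onto the invariant Weil divisors, and $\kp{V_0}=\kp{U_0}^G$, whence
\[
 \Cl{V_0}\;\cong\;\WDiv{U_0}^G\big/\{\divi{f}\,:\,f\in\kp{U_0}^{*G}\}.
\]
Passing to the class in $\Cl{U_0}$ defines a homomorphism $\Cl{V_0}\to\Cl{U_0}$ whose image is finitely generated, being a subgroup of the finitely generated group $\Cl{U_0}$. Its kernel $\mathcal{K}$ consists of the classes of invariant divisors that become principal on $U_0$, modulo those principal via an invariant function; I would bound it by mapping it into the character group $\ch{G}$.

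The crux, and the step I expect to be the main obstacle, is to show that every $f\in\kp{U_0}^*$ with $\divi{f}$ invariant is actually semi-invariant, i.e. $g\cdot f=\chi(g)f$ for a genuine character $\chi\in\ch{G}$. A priori one only knows $g\cdot f/f\in\schnitt{U_0}{O}^*$. However, by Rosenlicht's unit theorem $\schnitt{U_0}{O}^*/\bbK^*$ is a finitely generated free abelian group, on which the connected group $G$ must act trivially; the induced morphism $G\to\schnitt{U_0}{O}^*/\bbK^*$ from a connected algebraic group into a discrete group is therefore trivial, forcing $g\cdot f/f\in\bbK^*$. Granting this, $f\mapsto\chi$ exhibits $\mathcal{K}$ as a subquotient of $\ch{G}$, which is finitely generated since $G$ is affine algebraic. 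Consequently $\Cl{V_0}$ is an extension of two finitely generated groups, hence finitely generated, and the reductions above upgrade this to finite generation of $\Cl{U\gq G}$.
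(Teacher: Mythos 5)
Your proof has a genuine gap at the very first reduction, and the rest of the argument depends on it. You claim that, because Rosenlicht's theorem provides a geometric quotient on a dense open invariant subset and good quotients are categorical, the geometric locus ``may be taken to be $\pi^{-1}(V_0)$ for a nonempty open $V_0\subseteq V$.'' This is false, and it fails precisely in the situations the lemma is designed to cover. Take $G=\bbK^*$ acting on $U=X=\bbK^2$ by scalar multiplication: the good quotient is $\pi\colon\bbK^2\rightarrow\Spec\,\bbK$, a single point, so the only $\pi$-saturated open subsets are $\emptyset$ and $\bbK^2$, and $\bbK^2\rightarrow\Spec\,\bbK$ is not geometric; Rosenlicht's locus here is $\bbK^2\setminus\{0\}$ with quotient $\mathbb{P}^1$, which is not saturated and has nothing to do with $\pi$. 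More generally, whenever the generic $G$-orbit in $U$ is not closed --- exactly what the word ``good'' (as opposed to ``geometric'') permits, and what Theorem~\ref{thm2} needs --- no nonempty $\pi^{-1}(V_0)$ admits a geometric quotient; the categorical property only factors invariant morphisms through $\pi$, it cannot saturate Rosenlicht's locus. Your divisor-theoretic comparison genuinely requires geometricity: for a non-geometric good quotient one has $\kp{U\gq G}\subsetneq\kp{U}^G$ (in the example above, $\bbK\subsetneq\bbK(x/y)$), so invariant rational functions and invariant divisors on $U$ do not descend to $U\gq G$, and the identification of $\Cl{U\gq G}$ with a quotient of $\WDiv{U}^G$ collapses.

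Two further remarks. First, even in the geometric case your claim $\WDiv{V_0}\cong\WDiv{U_0}^G$ is not quite right: pullback can ramify along divisors with finite stabilizers, also for connected $G$ (e.g.\ $\bbK^*$ acting on $\{y\neq 0\}\subseteq\bbK^2$ by $t\cdot(x,y)=(tx,t^ny)$ has geometric quotient $(x,y)\mapsto x^n/y$, and the pullback of the prime divisor $\{0\}$ is $n\cdot\{x=0\}$); this part is patchable by working with classes rather than divisors. Second, your semi-invariance argument via Rosenlicht's unit theorem is correct and is in fact the same mechanism the paper exploits (the groups $\ch{G}$ and $E(U)=\calO{U}^*/\bbK^*$ in the Knop--Kraft--Vust sequences). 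But the paper's route avoids the geometric-quotient issue entirely by working with line bundles instead of Weil divisors: after reducing to the smooth case, the pullback $\pi^*\colon\pic{U\gq G}\rightarrow\linpic{G}{U}$ into $G$-linearized line bundles is injective for an \emph{arbitrary} good quotient (descent of trivializations uses only $\mathcal{O}_V=\left(\pi_*\mathcal{O}_U\right)^G$), and $\linpic{G}{U}$ is then squeezed between $\mathrm{H}^1_{\mathrm{alg}}\left(G,\calO{U}^*\right)$ and $\pic{U}$, both finitely generated. Some replacement of that descent step for non-geometric good quotients is what your argument is missing.
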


\begin{proof}
 Without loss of generality we assume $X$ and $U\gq G$ to be smooth.
From \cite[Proposition 4.2]{kraft} we infer that the pullback homomorphism $\pi^*\colon \pic{U\gq G}\rightarrow \linpic{G}{U}$ into the classes of $G$-linearized line bundles is injective. It therefore suffices to show that $\linpic{G}{U}$ is finitely generated. By \cite[Lemma 2.2]{kraft} the following sequence is exact
$$ 
\xymatrix{
\mathrm{H}^1_{\mathrm{alg}}\left(G,\calO{U}^*\right)
\ar[r]
&
\linpic{G}{U}
\ar[r]
&
\pic{U}
}.
$$
Note that the group of algebraic cocycles $\mathrm{H}^1_{\mathrm{alg}}\left(G,\calO{U}^*\right)$ is finitely generated by the exact sequence in \cite[Proposition 2.3]{kraft}
$$ 
\xymatrix{
\ch{G}
\ar[r]
&
\mathrm{H}^1_{\mathrm{alg}}\left(G,\calO{U}^*\right)
\ar[r]
&
\mathrm{H}^1\left(G/G^0,E\left(U\right)\right)
},
$$
where $G/G^0$ is finite and $E\left(U\right)=\calO{U}^*/\bbK^*$ is finitely generated by \cite[Proposition~1.3]{kraft}.
\end{proof}

\begin{proof}[Proof of Theorem~\ref{thm2}]
Without loss of generality we assume $X$ and $U\gq G$ to be smooth. By Lemma~\ref{clfingen} we can choose a finitely generated group $K$ of Weil divisors on the quotient space $U\gq G$ projecting surjectively onto the divisor class group $\Cl{U\gq G}$. With $\mathcal{S}$ denoting the sheaf of divisorial algebras associated to $K$, the Cox ring $\cox{U\gq G}$ is the quotient of $\schnitt{U\gq G}{S}$ by the ideal $\schnitt{U\gq G}{I}$. Thus it suffices to show that the algebra of global sections $\schnitt{U\gq G}{S}$ is finitely generated. 

The pullback group $\pi^*K$ consists of invariant Weil divisors on $U$. It is therefore canonically $G$-linearized and we have the corresponding $G$-representation on the algebra $\schnitt{U}{T}$ where $\mathcal{T}$ denotes the sheaf of divisorial algebras associated to the group $\pi^*K$. We claim that we have a pullback homomorphism mapping $\schnitt{U\gq G}{S}$ injectively onto the algebra $\schnitt{U}{T}^G$ of invariant sections of $\schnitt{U}{T}$: 
\[
 \pi^*\colon \schnitt{U\gq G}{S}\ \rightarrow \ \schnitt{U}{T}^G,\qquad \schnittv{U\gq G}{\mathcal{S}_D}\ni f\ \mapsto \ \pi^*f\in\schnittv{U}{\mathcal{T}_{\pi^*D}}.
\]
We first note that every pullback section $\pi^{*}f\in \schnittv{U}{\mathcal{T}_{\pi^*D}}$ is indeed $G$-invariant because $\pi^*K$ is canonically $G$-linearized and $\pi^*f$ is $G$-invariant as a rational function on $U$. On each homogeneous component of $\schnitt{U\gq G}{S}$ the map $\pi^*$ is injective because it is the pullback with respect to the surjective morphism $\pi\colon U\rightarrow U\gq G$. Since $\pi^*$ is graded this yields injectivity of $\pi^*$ as an algebra homomorphism. For surjectivity it suffices to show that every homogeneous $G$-invariant section is a pullback section because the actions of $G$ and $H$ commute and, thus, $\schnitt{U}{T}^G$ is a graded subalgebra of $\schnitt{U}{T}$.  Consider a $G$-invariant homogeneous section $f\in\schnittv{U}{\mathcal{T}_{\pi^{*}D}}$. Since $f$ is invariant as a rational function in $\bbK(U)$ and it is regular on $U':=U\backslash \pi^{-1}\left(\mathrm{Supp}\left(D\right)\right)$ it descends to a regular function $\tilde{f}$ on $\pi(U')$ which is an open subset of $U\gq G$. Observe that we have
\[
\pi^*(\mathrm{div}(\tilde{f})+D) \ = \ \divi{f}+\pi^*D \ \ge \ 0.
\]
In particular, we obtain that the divisor $\mathrm{div}(\tilde{f})+D$ is effective and thus $\tilde{f}$ is a section in $\schnittv{U\gq G}{\mathcal{S}_D}$. By construction $f$ equals the pullback $\pi^*\tilde{f}$; hence our claim follows.

Thus the algebras $\schnitt{U\gq G}{S}$ and $\schnitt{U}{T}^G$ are isomorphic. The algebra $\schnitt{U}{T}$ is finitely generated by Theorem \ref{thm1}. Hilbert's Finiteness Theorem then shows that the invariant algebra $\schnitt{U}{T}^G$ is finitely generated as well.
\end{proof}

\begin{acknowledgement}
 I would like to thank J\"urgen Hausen and Ivan Arzhantsev for helpful suggestions and discussions.
\end{acknowledgement}

\end{document}